\documentclass[12pt]{article}

\usepackage[T1]{fontenc}
\usepackage[utf8]{inputenc}
\usepackage{amsmath,amssymb,amsthm,mathtools}
\usepackage{enumitem,booktabs,microtype,graphicx,xcolor,tikz,fullpage}
\allowdisplaybreaks[4]
\newtheorem{theorem}{Theorem}
\newtheorem{lemma}{Lemma}[section]

\newtheorem{claim}{Claim}[section]

\newtheorem{conjecture}{Conjecture}

\title{Proving a conjecture concerning chromatic number, size and least eigenvalue}
\author{Leyou Xu$^a$\footnote{Email: leyouxu@m.scnu.edu.cn}, Bo Zhou$^b$\footnote{Email: zhoubo@m.scnu.edu.cn}\\
$^a$School of Computer Science, South China Normal University\\
Guangzhou 510631, P.R. China\\
$^b$School of CMathematical Sciences, South China Normal University\\
Guangzhou 510631, P.R. China
}
\date{}

\begin{document}
\maketitle

\begin{abstract}
Let $G$ be a simple nonempty graph with size $m$, chromatic number $\chi$, and least 
eigenvalue $\lambda$. We prove that 
\[
\chi(\chi-1) \le (m+1-\lambda^2)
 + \sqrt{(m+1-\lambda^2)^2-4(\lambda^2-1)(\lambda^2-m)} 
\]
with equality if and only if $G$ is either a complete graph or a complete bipartite graph, with possibly 
isolated vertices. The inequality was conjectured recently by 
Tang and Elphick in [Electron. J. Combin. 33 (2026), \#P2.65].
\\

\noindent
\textit{Keywords:} chromatic number, least eigenvalue, vertex-critical\\

\noindent
{\it 2020 Mathematics Subject Classification:} 05C50, 05C15
\end{abstract}

\section{Introduction}

Throughout this article, all graphs  are finite and simple. Let $G$ be a graph with vertex set $V(G)$ and edge set $E(G)$. The size of $G$ is $e(G)=|E(G)|$. 
A nonempty graph is a graph with size at least one. For two graphs $G$ and $H$, $G\cup H$ denotes the disjoint union of $G$ and $H$, and $G\vee H$ denotes the join of $G$ and $H$. The notations $K_n$ and $K_{a,b}$ denote the complete $n$-vertex graph and the complete bipartite graph with partite sizes $a$ and $b$. By $nK_1$ we denote the graph consisting of $n$ isolated vertices. By convention, $G\cup 0K_1=G$ for any graph $G$.
The chromatic number of a graph $G$, denoted by $\chi(G)$, is the minimum number of colors needed to color the vertices of $G$ such that no two adjacent vertices share the same color. 

Let $G$ be a graph of order $n$. 
Let $A(G)$ be the adjacency matrix of $G$. The eigenvalues of $A(G)$ are called the eigenvalues of  $G$, which are ordered as $\lambda_1(G)\ge \cdots\ge \lambda_n(G)$.  The least eigenvalue $\lambda_n(G)$ of $G$ is  denoted $\lambda (G)$. The largest eigenvalue $\lambda_1(G)$ is also known as the spectral radius of $G$. By Perron-Frobenius theorem, $-\lambda (G)\le \lambda_1(G)$.

For a graph parameter $f(G)$, when no confusion arises, we omit $G$ and simply write $f$. 

For a graph $G$ of order $n$, Tang and Elphick proved that if $3\le \chi\le n-1$, 
\begin{equation}\label{old}
\chi\le \frac{n}{2}+1+\lambda
 + \sqrt{\left(\frac{n}{2}+1+\lambda\right)^2-4(\lambda+1)\left(\lambda+\frac{n}{2}\right)}
\end{equation}
with equality if and only if $G\cong (K_{\frac{\chi}{2}}\cup \frac{n-\chi}{2}K_1)\vee (K_{\frac{\chi}{2}}\cup \frac{n-\chi}{2}K_1)$, where both $n$ and $\chi$ are even. For  $3\le \chi\le \frac{n}{2}$, this was already proved by Fan et al.~\cite{FYW} and they conjectured the range $3\le \chi\le \frac{n}{2}$ can be  extended to  $3\le \chi\le n-1$.  
Tang and Elphick observed that $\frac{n}{2}$ in \eqref{old} is closely related to the bound $\lambda\ge -\frac{n}{2}$ \cite{Con,Pow}. Let $m$ be the size of $G$. Wu and Elphick \cite{WuElphick} proved that
$\chi(\chi-1)\le (\lambda_1+1)\lambda_1\le 2m$. By analogy with \eqref{old} to replace $\frac{n}{2}$ with $m$, $\chi$ with $\chi(\chi-1)$ and $\lambda$ with $-\lambda^2$, Tang and Elphick \cite{TangElphick} proposed the following conjecture. 

\begin{conjecture} \label{con} \cite[Conjecture~8]{TangElphick}
For any nonempty graph $G$ with size $m$,
\begin{equation}\label{eq:main}
\chi(\chi-1) \le m+1-\lambda^2
 + \sqrt{(m+1-\lambda^2)^2-4(\lambda^2-1)(\lambda^2-m)}.
\end{equation}
\end{conjecture}

Tang and Elphick observed that the conjecture is immediate for bipartite graphs. They  verified it for all graphs of order at most nine and for the graphs of order at most one hundred in the Wolfram Mathematica database.
They also compared \eqref{eq:main} and \eqref{old} and found that the bound in \eqref{eq:main} typically performs better than the bound in \eqref{old}.

In this paper, we show that Conjecture \ref{con} is true and characterize the equality completely.

\begin{theorem}\label{X}
For any nonempty graph $G$ with size $m$, \eqref{eq:main} follows, and  
equality holds  if and only if $G\cong K_\chi\cup (n-\chi)K_1$ or when $\chi=2$ and $G\cong K_{a,b}\cup (n-a-b)K_1$ for some  $a,b\ge 1$, where $n$ is the order of $G$.
\end{theorem}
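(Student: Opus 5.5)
The plan is to reduce \eqref{eq:main} to a quadratic inequality and then use convexity in $\lambda^2$. Write $L=\lambda^2$, $y=\chi(\chi-1)/2$ and $b=m+1-L$.

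\textbf{Step 1 (range of $L$).} Since $G$ is nonempty, $L\ge 1$. Also $\sum_i\lambda_i^2=2m$ and $\lambda_1\ge -\lambda$ give $2L\le\lambda_1^2+\lambda^2\le 2m$, so $1\le L\le m$. Consequently $4(L-1)(L-m)\le 0$, so the radicand is at least $b^2$ and the right-hand side of \eqref{eq:main} is real and at least $2b$. The right-hand side is the larger root of $t^2-2bt+4(L-1)(L-m)=0$. Putting $t=2y$, inequality \eqref{eq:main} becomes equivalent to
\[
f(L):=L^2+(y-m-1)L+y^2-(m+1)y+m\le 0 .
\]
This reformulation works directly when $2y\ge b$. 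When $2y<b$, inequality \eqref{eq:main} holds trivially and strictly, because the right-hand side is at least $2b$.

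\textbf{Step 2 (convexity in $L$).} The function $f$ is a convex quadratic in $L$. A direct computation gives $f(1)=y(y-m)\le 0$, since $y=\chi(\chi-1)/2\le m$; this is the standard fact that a graph with chromatic number $\chi$ has at least $\binom{\chi}{2}$ edges. So it suffices to exhibit an upper bound $U\ge L$ with $f(U)\le 0$.

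Note that $f(m)=y(y-1)$. For $\chi=2$ this gives $y=1$ and $f(m)=0$, so $f\le 0$ on $[1,m]$ and the bipartite case is done. Equality in this case forces $f(L)=0$, i.e. $L\in\{1,m\}$ (for $m\ge 2$). The case $L=m$ means $\lambda_1=-\lambda=\sqrt m$ and all other eigenvalues vanish, which forces $K_{a,b}\cup(n-a-b)K_1$. The case $L=1$ forces $K_2\cup(n-2)K_1$.

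\textbf{Step 3 (the case $\chi\ge3$: main obstacle).} Here $f(m)>0$, so I need an upper bound on $L$ strictly smaller than $m$ that depends on $\chi$. The first attempt is to pass to a $\chi$-vertex-critical subgraph $H$. Then $\chi(H)=\chi$, $e(H)\le m$, and $\lambda(H)\ge\lambda$ by interlacing. Since $f$ is monotone in the right directions once $y$ is fixed, it should suffice to treat $H$; this monotonicity has to be checked, but $f$ is increasing in $m$ with derivative $-(L+y-1)\le0$, which is favourable.

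For $H$, combine three facts:
\begin{itemize}
\item the spectral bound $L\le 2m-\lambda_1^2$;
\item Wilf's bound $\chi\le 1+\lambda_1$, i.e. $y\le\lambda_1(\lambda_1+1)/2$;
\item minimum degree at least $\chi-1$ in a critical graph.
\end{itemize}
These should yield $U=2m-\lambda_1^2$ with $\lambda_1^2\ge\max\{(\chi-1)^2,\,m\cdot c_\chi\}$. One must also use that a nonbipartite connected graph has $\lambda_1>-\lambda$, so the remaining eigenvalues carry weight. Verifying $f(U)\le 0$ is the hard, computational part. If it fails in some range, I would instead bound $\sum_{i\ne1,n}\lambda_i^2$ from below using the odd-cycle or $K_\chi$ structure.

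\textbf{Step 4 (equality for $\chi\ge3$).} Equality requires equality in each step. In particular $m=\binom{\chi}{2}$, so the nonisolated part is exactly $K_\chi$. Conversely, $K_\chi\cup(n-\chi)K_1$ gives $L=1$ and $f(1)=0$, so equality holds.
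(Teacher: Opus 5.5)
Your reformulation is correct and matches the paper's. Writing $m=y+s$, your $f(L)$ becomes $L^2-(s+1)L-s(y-1)$, which is exactly the quadratic behind $\lambda^2\le U_\chi(s)$. Your $\chi=2$ argument also works. One small correction: there $f(L)=L(L-m)$, so for $m\ge 2$ equality forces $L=m$ only, since $f(1)=1-m<0$. The genuine gap is Step 3, which is the entire content of the theorem for $\chi\ge3$. It is not an argument, and both tools you propose fail.

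\textbf{The reduction to a critical subgraph runs the wrong way.} Interlacing gives $\lambda(G)\le\lambda(H)$ for a $\chi$-vertex-critical induced subgraph $H$, i.e. $\lambda(G)^2\ge\lambda(H)^2$. You need an \emph{upper} bound on $\lambda(G)^2$, and the favourable monotonicity in $m$ does not compensate. For $G=K_\chi\cup K_{p,p}$ the critical subgraph is $K_\chi$, with $\lambda(H)^2=1$, while $\lambda(G)^2=p^2$.

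\textbf{The bound $\lambda^2\le 2m-\lambda_1^2$ is too weak when $s=m-\binom{\chi}{2}$ is large.} With $\lambda_1\ge\chi-1$ it yields only $\lambda^2\le 2s+\chi-1$. But $U_\chi(s)\le s+\binom{\chi}{2}$ always, so this fails once $s>\binom{\chi-1}{2}$. In the same example with $p\ge\chi-1$ one has $2m-\lambda_1^2=s+\chi(\chi-1)>U_\chi(s)$, so no lower bound on $\lambda_1$ can rescue it. Your other two ingredients do not help either:
\begin{itemize}
\item $\lambda_1^2\ge c_\chi m$ is false, e.g.\ for $K_\chi$ together with a long matching.
\item $\lambda_1>-\lambda$ for connected nonbipartite graphs is irrelevant, because the component carrying $\lambda$ may be bipartite.
\end{itemize}

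\textbf{What is missing.} You need an estimate $\lambda^2\le m-\beta$ with $\beta$ of order $\chi^2/4$ that accounts for all of $G$. The paper gets this from the sign partition $(X,Y)$ of a $\lambda$-eigenvector.
\begin{itemize}
\item Lemma~\ref{maxcut} gives $\lambda^2\le e(H)$ for the bipartite subgraph $H$ between $X$ and $Y$.
\item The edges inside $X$ and $Y$ number at least $\binom a2+\binom b2$ with $a+b\ge\chi$. When $\omega<\chi$ this is sharpened to $B_\chi$ via Lemma~\ref{lem:small-extremal-coloring}.
\end{itemize}
This settles large $s$. Small $s$ still needs two further tools:
\begin{itemize}
\item when $\omega=\chi$, the semidefiniteness argument giving $-\lambda<\tau$ with $\tau(\tau-1)=s$ (Claim~\ref{c1});
\item when $\omega<\chi$, Dirac's edge bound for noncomplete vertex-critical graphs (Theorem~\ref{thm:dirac}), which is only where $\lambda^2\le 2m-\lambda_1^2$ finally becomes effective.
\end{itemize}

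\textbf{Consequence for Step 4.} Your equality argument is unsupported: nothing in your proposal forces $m=\binom{\chi}{2}$. The paper instead proves strict inequality for every $s\ge1$, case by case. Examples are the strictness in Claim~\ref{c1}, via the sign pattern on the clique, and the divisibility arguments in Case 2.2.1.
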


\section{Preliminaries}

Let $G$ be a graph. For $\emptyset\ne S\subseteq V(G)$, let $G[S]$ denote the subgraph of $G$ induced by $S$. If $S\subset V(G)$, then $G-S=G[V(G)\setminus S]$.
For $u\in V(G)$, we write $G-u$ for $G-\{u\}$. 
For $E_1\subseteq E(G)$, $G-E_1$ denotes the spanning subgraph of $G$ obtained by deleting edges of $E_1$. In particular, if $E_1=\{e\}$, we simply write $G-e$ for $G-\{e\}$.

For $v\in V(G)$, let $d_G(v)$ denote the degree of $v$ in $G$. If $S\subseteq V(G)$, we  simply write $d_S(v)$ for the number of neighbors of $v$ in $S$. 
For disjoint subsets $X,Y\subset V(G)$, we denote by $e(X,Y)$ the number of edges between vertices of $X$ and vertices of $Y$.

A clique in a graph is a set of vertices that are pairwise adjacent. The maximum size of a clique of $G$ is called the clique number of $G$, denoted by $\omega(G)$. Evidently, $\omega(G)\le \chi(G)$. 

Let $r$ be a positive integer. An $r$-chromatic graph is a graph whose chromatic number is exactly $r$. Every $r$-chromatic graph has size at least ${r\choose 2}$. 
A graph $G$ is $r$-vertex-critical if it is $r$-chromatic and  
$\chi(G-v)\le r-1$ for every $v\in V(G)$ \cite{Jen}.  
Every $r$-chromatic graph contains an induced
$r$-vertex-critical subgraph. 
Note that  for $r\ge 4$, the set of  $r$-vertex-critical graphs and the  set of $r$-critical graphs (those $G$ with $\chi (G)=r$ and  $\chi(G-t)\le r-1$ for every $t\in V(G)\cup E(G)$) are different.

%
%

\begin{lemma}\label{edge} \cite{Jen}
An $r$-vertex-critical graph has minimum degree at least $r-1$.  
\end{lemma}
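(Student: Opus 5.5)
The plan is a direct argument by contradiction using only the definition of $r$-vertex-criticality, with no spectral input. Let $G$ be $r$-vertex-critical. Suppose, for contradiction, that some vertex $v\in V(G)$ has $d_G(v)\le r-2$. First dispose of the degenerate case. If $r=1$, then the claimed bound is $d_G(v)\ge 0$, which is trivial, so we may assume $r\ge 2$. In that case $G$ has at least two vertices, so $G-v$ is a genuine (nonempty-vertex-set) graph.

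By vertex-criticality, $\chi(G-v)\le r-1$. Fix a proper colouring $c$ of $G-v$ using colours from $\{1,\dots,r-1\}$. The neighbours of $v$ all lie in $G-v$, and there are at most $r-2$ of them. Hence they receive at most $r-2$ distinct colours under $c$, so some colour $k\in\{1,\dots,r-1\}$ appears on no neighbour of $v$. Extending $c$ by setting $c(v)=k$ gives a proper colouring of $G$: every edge not at $v$ is already properly coloured, and every edge at $v$ joins $v$ to a vertex whose colour differs from $k$. Thus $\chi(G)\le r-1$, contradicting that $G$ is $r$-chromatic. Therefore every vertex has degree at least $r-1$.

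There is no real obstacle here. The only point needing care is the degenerate case $r=1$ (or a single-vertex graph), where $G-v$ has no vertices; it is handled separately above. The pigeonhole step on the colours of $N_G(v)$ is the heart of the argument.
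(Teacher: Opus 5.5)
Your proof is correct. It is the standard argument behind this result: when $d_G(v)\le r-2$, any $(r-1)$-colouring of $G-v$ extends to $v$, so $\chi(G)\le r-1$. The paper gives no proof of its own and simply cites the lemma from Jensen--Toft, so there is no different route to compare against.
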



\begin{lemma}\label{order}
For a noncomplete  $r$-vertex-critical graph $G$ with  $r\ge 3$, $|V(G)|\ge r+2$. 
\end{lemma}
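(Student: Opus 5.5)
We argue by contradiction and assume $G$ is a noncomplete $r$-vertex-critical graph with $r\ge 3$ and $|V(G)|\le r+1$. Since $G$ is $r$-chromatic, $|V(G)|\ge r$. If $|V(G)|=r$, then an $r$-coloring of $r$ vertices uses each color exactly once. Any nonadjacent pair could then share a color, giving an $(r-1)$-coloring. So $G$ is complete, a contradiction. Hence $|V(G)|=r+1$.

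For the case $n=r+1$, the plan is to pass to the complement $\overline{G}$ and use the standard fact that $\chi(G)=n-\nu(\overline{G})$, where $\nu$ denotes the matching number. Indeed, in an optimal coloring the color classes are independent sets of $G$, hence cliques of $\overline{G}$. With $n$ vertices and $k$ classes, a coloring has at least $n-k$ vertices in non-singleton classes. Conversely, a coloring can always be improved greedily. The clean route is therefore to show directly which graphs on $r+1$ vertices have $\chi=r$.

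If $\chi(G)=r=n-1$, then $\overline{G}$ is nonempty, since $G$ is noncomplete. Suppose $\overline{G}$ contained two disjoint edges $xy$ and $zw$. Coloring $\{x,y\}$ with one color, $\{z,w\}$ with another, and every remaining vertex with its own color would use $n-2=r-1$ colors, a contradiction. Suppose instead $\overline{G}$ contained a triangle $xyz$. Then $\{x,y,z\}$ is independent in $G$ and gives a coloring with $n-2$ colors, again a contradiction. Hence $\overline{G}$ has no two disjoint edges and no triangle. A triangle-free graph in which all edges pairwise intersect is a star, so $E(\overline{G})$ forms a star centered at some vertex $u$, with at least one edge $uv$.

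Now we use vertex-criticality, together with Lemma~\ref{edge}, to reach the contradiction. Take a vertex $w\ne u,v$; one exists since $n=r+1\ge 4$. Then $G-w$ has $r$ vertices and still contains the nonadjacent pair $u,v$. Coloring $u,v$ alike and all others distinctly, $\chi(G-w)\le r-1$, which is consistent with criticality. So we test $u$ instead. $G-u$ has $r$ vertices, and every edge of $\overline{G}$ contains $u$, so $\overline{G}-u$ is edgeless. Thus $G-u\cong K_r$ and $\chi(G-u)=r$, contradicting $r$-vertex-criticality. (Alternatively, Lemma~\ref{edge} gives $d_G(u)\ge r-1=n-2$, so $u$ has at most one non-neighbour, and the same conclusion follows.) This contradiction shows $|V(G)|\ge r+2$.

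The only step needing care is the structural claim that $\overline{G}$ is a star, meaning it has neither two disjoint edges nor a triangle. This follows from the two explicit $(r-1)$-colorings described above. The hypothesis $r\ge 3$ is used only to guarantee $n\ge 4$, and in fact the argument goes through whenever $\overline{G}$ is nonempty.
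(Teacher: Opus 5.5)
Your proof is correct, and its skeleton matches the paper's. You reduce to $|V(G)|=r+1$, note that two disjoint non-edges give an $(r-1)$-coloring, and finish by deleting a vertex so that $K_r$ remains. The difference is how you pin down $\overline{G}$:
\begin{itemize}
\item The paper applies Lemma~\ref{edge}: since $\delta(G)\ge r-1=n-2$, $\overline{G}$ has maximum degree at most one. So $\overline{G}$ is a single edge $uv$, $G\cong K_{r+1}-uv$, and deleting $u$ leaves $K_r$.
\item You instead exclude a triangle in $\overline{G}$ by a second explicit coloring. You then use the fact that pairwise-intersecting edges form a star or a triangle, and delete the star's center.
\end{itemize}
Your version is self-contained, since it never needs the minimum-degree lemma; your parenthetical alternative is exactly the paper's route. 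You also justify the case $|V(G)|=r$, which the paper passes over silently. The paper's version is shorter once Lemma~\ref{edge} is available.

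One correction: the ``standard fact'' $\chi(G)=n-\nu(\overline{G})$ is false in general. For $G=\overline{K_3}$ one has $\chi(G)=1$ but $n-\nu(\overline{G})=2$. Only $\chi(G)\le n-\nu(\overline{G})$ holds in general, with equality when $\alpha(G)\le 2$; that is also why your triangle case needs separate treatment. You never use the identity, so delete that paragraph, along with the exploratory step about a vertex $w\ne u,v$.
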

\begin{proof}  Suppose that  $|V(G)|< r+2$. Then, as $G$ is not complete, $|V(G)|=r+1$, and there are two vertices $u$ and $v$ that  are not adjacent. By Lemma~\ref{edge}, both $u$ and $v$ are adjacent to all other vertices of $G$. Moreover, if there exists another vertex pair $\{w,z\}$ that are not adjacent, then we may color $\{w,z\}$ with one color, $\{u,v\}$ with another color and the remaining $r-3$ vertices with distinct colors. This gives an $(r-1)$-coloring, a contradiction. So $G\cong K_{r+1}-uv$, which is not $r$-vertex-critical, a contradiction. 
\end{proof}

We shall need the following structural lemma, which guarantees the existence of a clique that is critical in the sense of vertex deletion. 

\begin{lemma}\label{lem:small-extremal-coloring}
Let $G$ be an $r$-chromatic graph with $r\ge 2$ and $e(G) \le \binom{r}{2}+r-2$.
Then $G$ contains an $r$-clique $C$ such that, for every $u\in C$,
$\chi(G-u)\le r-1$.
\end{lemma}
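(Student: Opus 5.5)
The plan is to show that $G$ has exactly one $r$-clique $C$, and that every $r$-chromatic subgraph of $G$ within the edge budget must contain an $r$-clique. The required property of $C$ then follows at once.

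\emph{Step 1: every induced $r$-vertex-critical subgraph of $G$ is $K_r$.} Let $H$ be an induced $r$-vertex-critical subgraph of $G$; one exists since every $r$-chromatic graph contains one. For $r=2$, deleting any vertex of $H$ leaves an edgeless graph, so $H$ has no two disjoint edges. A graph with no two disjoint edges and no odd cycle is a star, and a star is $2$-vertex-critical only if it is $K_2$. For $r\ge 3$, suppose $H$ is not complete. Lemma~\ref{order} gives $|V(H)|\ge r+2$, and Lemma~\ref{edge} gives minimum degree at least $r-1$. Hence
\[
e(G)\ge e(H)\ge \frac{(r+2)(r-1)}{2}=\binom{r}{2}+r-1,
\]
which contradicts $e(G)\le\binom r2+r-2$. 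So $H\cong K_r$. The same argument applies to any subgraph $G'$ of $G$, since $e(G')\le e(G)$: if $\chi(G')=r$, then $G'$ contains an $r$-clique.

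\emph{Step 2: the $r$-clique is unique.} Let $C$ be an $r$-clique of $G$, which exists by Step~1. Suppose $C'\neq C$ is another $r$-clique with $|C\cap C'|=t\le r-1$. Then
\[
e(G)\ge 2\binom r2-\binom t2\ge 2\binom r2-\binom{r-1}2=\binom r2+r-1,
\]
again a contradiction.

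\emph{Step 3: conclusion.} Fix $u\in C$ and suppose $\chi(G-u)\ge r$. Since $G-u$ is a subgraph of $G$, it has chromatic number at most $r$, so $\chi(G-u)=r$. By Step~1 applied to $G'=G-u$, the graph $G-u$ contains an $r$-clique. That clique avoids $u$, so it differs from $C$, contradicting Step~2. Hence $\chi(G-u)\le r-1$ for every $u\in C$.

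The main point needing care is Step~1. It must be applied to the subgraph $G-u$ rather than to $G$, which is valid because the edge bound is inherited by subgraphs. The case $r=2$ must be handled directly, since Lemma~\ref{order} assumes $r\ge3$. Beyond that, the proof is two edge-counting computations.
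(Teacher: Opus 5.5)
Your proof is correct, and it departs from the paper's after Step 1. Your Step 1 is the paper's first step: an induced $r$-vertex-critical subgraph must be $K_r$ by Lemma~\ref{order} and Lemma~\ref{edge}. For $r=2$ the paper just notes $e(G)=1$, so $G\cong K_2\cup tK_1$, which is quicker than your star argument.

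The paper then proves $\chi(G-u)\le r-1$ by building a colouring. It colours $C\setminus\{u\}$ with $r-1$ colours. At most $r-2$ edges have an end outside $C$, so every $v\notin C$ satisfies $d_{C\setminus\{u\}}(v)+d_{G-C}(v)\le r-2$. Hence a greedy colouring of $G-C$ from the lists of unused colours always succeeds.

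You never build a colouring. You note that the edge bound passes to subgraphs, so Step 1 applies to $G-u$. The count $2\binom r2-\binom t2\ge\binom r2+r-1$ shows the $r$-clique is unique, so $G-u$ cannot be $r$-chromatic.

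Your route uses the critical-subgraph argument a second time. In return it gives an extra structural fact: $G$ has exactly one $r$-clique, and it lies in every $r$-chromatic subgraph of $G$. The paper's route is more elementary at that stage and produces an explicit $(r-1)$-colouring of $G-u$.
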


\begin{proof}
If $r=2$, then $e(G)=1$, and $G\cong K_2\cup tK_1$ for some $t$, so the result follows. 

Assume that $r\ge 3$. 
Let $F$ be an induced $r$-vertex-critical subgraph of $G$. 
If $F\ncong K_r$, then by Lemma~\ref{order}, we have $|V(F)|\ge r+2$ and so by Lemma~\ref{edge}, $e(G)\ge e(F)\ge \frac{(r-1)(r+2)}{2}=\binom{r}{2}+r-1$, a contradiction. Hence $F\cong K_r$. Then $C=V(F)$ is an $r$-clique of $G$.

Let $u$ be an arbitrary vertex of $C$.
First, color all vertices of $C\setminus\{u\}$ with  colors $1,\dots, r-1$. Let $F_0=G-C$. As $e(G) \le \binom{r}{2}+r-2$, for each $v\in V(F_0)$, let 
\[
L(v)=\{i\in \{1,\dots, r-1\}: \text{$i$ is not used on neighbors of $v$ in $C\setminus\{u\}$}\}. 
\]
As $e(G)\le \binom{r}{2}+r-2$, there are at most $r-2$ edges of $G$ with one end outside $C$, so for each $v\in V(F_0)$,
$d_{C\setminus\{u\}}(v)+d_{F_0}(v)\le r-2$, implying that 
$|L(v)|\ge (r-1)-d_{C\setminus\{u\}}(v)\ge d_{F_0}(v)+1$.
We then color the vertices of $F_0$ greedily in any order. When a vertex $v$ is colored, at most $d_{F_0}(v)$ colors are forbidden by its already colored neighbors, so an available color remains
in $L(v)$. This gives an $(r-1)$-coloring of $G-u$, so $\chi(G-u)\le r-1$.
\end{proof}

Dirac  established a lower bound  on the size of a noncomplete $r$-vertex-critical graph for $r\ge 4$, see also \cite{BernshteynKostochka}.

\begin{theorem}\label{thm:dirac} \cite[Theorem 15]{Dirac}
Let $r\ge 4$, and let $G$ be a noncomplete $r$-vertex-critical graph. Then
$2e(G)\ge (r-1)|V(G)|+r-3$.
\end{theorem}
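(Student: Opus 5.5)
The plan is to follow Gallai's analysis of the low-vertex subgraph and then count degree excess. Let $G$ be a noncomplete $r$-vertex-critical graph on $n$ vertices, $r\ge 4$. Since $2e(G)=(r-1)n+\sum_{v}(d(v)-(r-1))$, and every term is nonnegative by Lemma~\ref{edge}, it suffices to show that the total excess
\[
\varepsilon(G)=\sum_{v\in V(G)}\bigl(d(v)-(r-1)\bigr)
\]
is at least $r-3$. Put $L=\{v:d(v)=r-1\}$ (low vertices) and $H=V(G)\setminus L$ (high vertices). Then $\varepsilon(G)=\sum_{v\in H}(d(v)-r+1)$.

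First, $H\neq\emptyset$. A vertex-critical graph is connected, since otherwise some component would already be $r$-chromatic and deleting a vertex outside it would not lower $\chi$. If $H=\emptyset$, then $G$ would be connected, $(r-1)$-regular and $r$-chromatic. By Brooks' theorem it would then be $K_r$ (an odd cycle is excluded because $r\ge 4$), contrary to $G$ being noncomplete.

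Next I would establish Gallai's structure theorem for $G[L]$ in the vertex-critical setting: every block of every component of $G[L]$ is a complete graph or an odd cycle. The argument is the usual one. Take a component $T$ of $G[L]$ and properly $(r-1)$-colour $G-V(T)$; such a colouring exists since $\chi(G-v)\le r-1$ for each $v\in T$. Each $v\in T$ then has a list of available colours of size at least $d_T(v)$. The colouring cannot be extended to $T$, so $T$ is not degree-choosable, and by the Borodin/Erdős--Rubin--Taylor characterisation $T$ is a Gallai tree. Moreover, no block of $T$ is $K_r$: a $K_r$ would be an $r$-chromatic proper subgraph, and deleting a vertex outside it would contradict vertex-criticality.

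The counting step is where the work lies. Each component $T$ of $G[L]$ sends $e(T,H)=\sum_{v\in T}(r-1-d_T(v))$ edges to $H$. Using the Gallai tree structure with blocks of size at most $r-1$, I would show $e(T,H)\ge r-1$. In a leaf block that is a $K_{s}$ with $s\le r-1$, each non-cut vertex has $r-s$ outside edges. For an odd-cycle leaf block, each non-cut vertex has $r-3\ge 1$ outside edges. Summing over leaf blocks gives the bound; the extreme case is a single $K_{r-1}$ with one edge out of each vertex.

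Then I would split into cases on $|H|$.

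If $|H|=1$, say $H=\{h\}$, then $d(h)\ge e(L,H)$. The task is to show $d(h)\ge 2r-4$. To do this I would $(r-1)$-colour $G-h$ and argue that if $h$ had few neighbours, then some colour class could be recoloured (by Kempe-chain swaps inside the Gallai trees) so that $h$ sees at most $r-2$ colours. That would give an $(r-1)$-colouring of $G$, a contradiction.

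If $|H|\ge 2$, I would use
\[
\varepsilon(G)\ge e(L,H)+2e(H)-(r-1)|H|+\sum_{v\in H}\bigl(d(v)-d(v)\bigr),
\]
that is, compare $\sum_{v\in H}d(v)=e(L,H)+2e(G[H])$ with $(r-1)|H|$. Here the key input is that each high vertex has degree at least $r$, together with the bound $e(T,H)\ge r-1$ applied to each component of $G[L]$.

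I expect the main obstacle to be this final case analysis. Obtaining exactly $r-3$ needs care when $H$ is small and the components of $G[L]$ are large cliques $K_{r-1}$, since the excess is then concentrated on very few vertices. This is where Dirac's original recolouring argument is genuinely needed, rather than pure counting.
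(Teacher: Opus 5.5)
\textbf{There is a genuine gap: the case $2\le |H|\le r-4$ is never proved, and it cannot be closed by the counting you propose.}

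The paper does not prove this statement; it cites Dirac's bound for $r$-critical graphs. The transfer to the vertex-critical setting is immediate. An $r$-vertex-critical $G$ contains an $r$-critical subgraph $F$. This $F$ must be spanning, since otherwise deleting a vertex outside $F$ keeps $\chi=r$. Also $F\neq K_r$ unless $G=K_r$. Hence $2e(G)\ge 2e(F)\ge (r-1)|V(G)|+r-3$.

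\textbf{What is correct.} You instead try to reprove Dirac's theorem from scratch, and the preliminary steps are sound. This covers connectivity, $H\neq\emptyset$ via Brooks, the Gallai-tree structure of $G[L]$ via degree-choosability, the exclusion of $K_r$ blocks, and $e(T,H)\ge r-1$.

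\textbf{The case $|H|=1$ needs no Kempe chains.} A vertex-critical graph has no cut vertex, so $G-h$ is a single Gallai tree $T$ with $d_T(v)\ge r-2$ for every $v$.
If $T$ is one block, it is either $K_{r-1}$, which forces $G=K_r$, or (only when $r=4$) an odd cycle of length at least $5$, which gives $d(h)\ge 5$.
Otherwise $T$ has two leaf blocks. Each leaf block must be $K_{r-1}$, or an odd cycle when $r=4$. Its at least $r-2$ non-cut vertices all need their one remaining neighbour, which can only be $h$.
So $d(h)\ge 2r-4$ by counting alone. This is also where the tight examples, such as Haj\'os joins of two $K_r$'s, live.

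\textbf{Where the proof breaks: $|H|\ge 2$.} Your displayed inequality is just the identity $\varepsilon(G)=e(L,H)+2e(G[H])-(r-1)|H|$. The two inputs you name give only
\[
\varepsilon\ge |H| \qquad\text{and}\qquad \varepsilon\ge (r-1)(k-|H|)+2e(G[H]),
\]
where $k$ is the number of components of $G[L]$. These settle $|H|\ge r-3$ or $k>|H|$. Nothing is said about $2\le |H|\le r-4$ with $k\le |H|$.

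\textbf{Counting alone cannot settle that regime.} Take two disjoint copies of $K_{r-1}$ and an edge $xy$. Join each clique vertex to exactly one of $x,y$ so that $d(x)=d(y)=r$. Every constraint you use holds:
\begin{itemize}
\item low vertices have degree $r-1$;
\item $G[L]$ is a union of $K_{r-1}$'s, so its components are Gallai trees without $K_r$ blocks;
\item each component sends exactly $r-1$ edges to $H$;
\item high vertices have degree at least $r$.
\end{itemize}
Yet $\varepsilon=2<r-3$ once $r\ge 6$.

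This configuration is ruled out only by a colouring argument. If $x$ sees an entire clique, there is a $K_r$, contradicting vertex-criticality. Otherwise $x$ sees $p$ vertices of one clique and $r-1-p$ of the other, with $1\le p\le r-2$. One can then align the colourings of the two cliques so that $x$ and $y$ each see at most $r-2$ colours and have distinct free colours. This gives an $(r-1)$-colouring.

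An argument of this kind, for the regime with few high vertices, is the real content of Dirac's theorem. Your proposal only announces that ``Dirac's original recolouring argument is genuinely needed'' without supplying it, so as written it is not a proof.
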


Given a graph $G$ that is not necessarily connected, we have by Perron-Frobenius theorem applied to each component, there is a nonnegative unit eigenvector corresponding to $\lambda_1$.
This implies the following well known lemma.

\begin{lemma}\label{subgraph}
If $F$ is a subgraph of a graph $G$, then $\lambda_1(F)\le \lambda_1(G)$.
\end{lemma}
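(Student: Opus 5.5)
The statement is Lemma~\ref{subgraph}: if $F$ is a subgraph of $G$, then $\lambda_1(F)\le\lambda_1(G)$. I would use the Rayleigh quotient together with the nonnegative Perron vector noted just before the statement.

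First, I reduce to the case where $F$ is a spanning subgraph of $G$. If $V(F)\subsetneq V(G)$, add the vertices of $V(G)\setminus V(F)$ to $F$ as isolated vertices. This gives a spanning subgraph $F'$ of $G$ whose adjacency matrix is $A(F)\oplus O$. The spectrum of $F'$ is that of $F$ together with extra zeros. Since $\lambda_1(F)\ge 0$ (the trace of $A(F)$ is zero, so its largest eigenvalue cannot be negative), we get $\lambda_1(F')=\lambda_1(F)$.

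So assume $V(F)=V(G)$. Then $A(F)\le A(G)$ entrywise, since every edge of $F$ is an edge of $G$. As observed before the lemma, applying the Perron--Frobenius theorem to each component of $F$ gives a unit eigenvector $x\ge 0$ with $A(F)x=\lambda_1(F)x$. Concretely, take a component of $F$ attaining the largest spectral radius, use its nonnegative Perron vector there, and set $x$ to zero elsewhere.

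By the Rayleigh principle,
\[
\lambda_1(F)=x^{T}A(F)x=\sum_{uv\in E(F)}2x_ux_v\le \sum_{uv\in E(G)}2x_ux_v=x^{T}A(G)x\le \max_{\|y\|=1}y^{T}A(G)y=\lambda_1(G).
\]
The first inequality holds because every term $x_ux_v$ is nonnegative and $E(F)\subseteq E(G)$.

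The only step needing care is the existence of a nonnegative unit eigenvector for $\lambda_1(F)$ when $F$ is disconnected. This is exactly the component-wise Perron--Frobenius remark preceding the lemma, so no real obstacle remains.
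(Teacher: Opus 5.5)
Your proof is correct and follows the paper's route: the paper obtains the lemma from the existence of a nonnegative unit eigenvector for $\lambda_1$ (Perron--Frobenius applied to each component) combined with the Rayleigh quotient, exactly as you do. Your reduction to a spanning subgraph, which adds isolated vertices and uses $\lambda_1(F)\ge 0$, simply spells out the zero-extension of the eigenvector that the paper leaves implicit.
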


The bound in the following lemma is known for (connected) graphs \cite{Powers}  and the equality case for connected bipartite graphs is also known \cite{HS}. However, for completeness, we include a proof here.

\begin{lemma} \label{BIS}
For a graph $G$ of size $m\ge 1$, $\lambda^2\le m$ with equality if and only if $G$ is a complete bipartite graph with possibly isolated vertices.
\end{lemma}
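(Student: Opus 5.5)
Let $n$ be the order of $G$, $A=A(G)$, and let $x$ be a real unit eigenvector for $\lambda=\lambda_n(G)$. The starting point is the trace identity
\[
\sum_{i=1}^n \lambda_i^2=\operatorname{tr}(A^2)=2m .
\]
From this I do not expect $\lambda^2\le m$ directly. What it does give is $\lambda^2\le 2m-\lambda_1^2$. Since $-\lambda\le\lambda_1$ by Perron--Frobenius, we have $\lambda_1^2\ge\lambda^2$, and therefore $2\lambda^2\le \lambda^2+\lambda_1^2\le 2m$. This gives the inequality $\lambda^2\le m$.

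For equality, both $\lambda_1^2=\lambda^2$ and $\sum_{i=2}^{n-1}\lambda_i^2=0$ are needed. So the spectrum is $\{\lambda_1,0,\dots,0,-\lambda_1\}$, with $\lambda_1=\sqrt m>0$. In particular, $A$ has rank $2$.

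I then use the rank condition structurally. The induced subgraph on the non-isolated vertices has an adjacency matrix of rank $2$. I claim such a graph is complete bipartite. It contains no induced $P_4$ and no induced $K_3$, because both have rank greater than $2$ ($\det A(K_3)=2\neq 0$, and $P_4$ has rank $4$); it also contains no induced $2K_2$ and no induced $K_1\cup P_3$, which have rank $4$ or $3$. Since eigenvalue interlacing and rank are monotone under taking principal submatrices, none of these can appear. A graph with no isolated vertices and no induced $2K_2$ or $P_4$ is connected. Being triangle-free, $P_4$-free and connected, it is complete bipartite, since connected $P_4$-free graphs are joins and triangle-freeness forces each side of the join to be edgeless.

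For the converse, $K_{a,b}$ has spectrum $\pm\sqrt{ab}$ and zeros, so $\lambda^2=ab=m$. Adding isolated vertices only adds zero eigenvalues.

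The main point needing care is the equality characterization. In particular, I must check the rank claims for the small forbidden graphs, and handle a disconnected non-isolated part: two nontrivial components would each contribute a positive eigenvalue, giving $\lambda_2>0$, which contradicts the spectrum found above. That is a cleaner way to argue connectivity, after which the single component is bipartite, since its spectrum is symmetric with simple extreme eigenvalues, and complete bipartite by rank $2$.
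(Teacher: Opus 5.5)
Your proof is correct. The inequality part is exactly the paper's argument: $\lambda^2+\lambda_1^2\le 2m$ together with $-\lambda\le\lambda_1$.

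For the equality case you take a different route. The paper works with the signs of the eigenvalues:
\begin{itemize}
\item the spectrum $\{\lambda_1,0,\dots,0,-\lambda_1\}$ has exactly one positive eigenvalue, so there is only one nontrivial component $F$;
\item Perron--Frobenius (index of imprimitivity) forces $F$ to be bipartite, because $-\lambda_1(F)$ is an eigenvalue;
\item interlacing excludes an induced $P_4$, since $P_4$ has two negative eigenvalues.
\end{itemize}
You instead use only that $A(G)$ has rank $2$, and that a principal submatrix cannot have larger rank. This excludes induced $K_3$, $2K_2$ and $P_4$. Then $2K_2$-freeness alone makes the non-isolated part connected, and triangle-free, $P_4$-free and connected gives $K_{a,b}$.

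What each route buys: yours avoids the imprimitivity argument and in effect reproves the classical fact that graphs of adjacency rank $2$ are complete bipartite plus isolated vertices. The price is the appeal to the structure theorem that connected $P_4$-free graphs are joins. You can replace that with an elementary step:
\begin{itemize}
\item a shortest odd cycle is induced; in a triangle-free graph it has length at least $5$, so it contains an induced $P_4$, hence your graph is bipartite;
\item in a connected bipartite graph, a nonadjacent pair across the bipartition is joined by a shortest path of length at least $3$, whose first four vertices form an induced $P_4$.
\end{itemize}

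One correction: $K_1\cup P_3$ has rank $2$, not $3$, since the two end vertices of $P_3$ have identical rows. So the rank argument does not exclude it. You never use it, so just delete that claim.
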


\begin{proof} 
Let $n=|V(G)|$. From $\sum_{i=1}^n\lambda_i^2=2m$, one has $\lambda^2+\lambda_1^2\le 2m$. As $-\lambda \le \lambda_1$, it follows that $\lambda^2\le m$ \cite{Powers}.

If $G$ is a complete bipartite graph, then it is trivial that  $\lambda^2=\lambda_1^2=m$.

Suppose that $\lambda^2=m$. Then all inequalities above are equalities, so $-\lambda=\lambda_1$ and $\lambda_i=0$ for $1<i<n$. As $G$ has exactly one positive eigenvalue, $G$ has only one non-trivial connected component, say $F$. By Perron-Frobenius theorem, $F$ is bipartite (otherwise, the index of imprimitivity of $A(F)$ is $1$, so $\lambda$ cannot be an eigenvalue of $F$ and so $G$). 
Then $F$ is a complete bipartite graph as otherwise $F$ contains a $4$-vertex induced path, so Cauchy's interlacing theorem (see \cite[Theorem 0.10]{CDS}) implies that  $F$ has two neigative eigenvalues, a contradiction.
\end{proof}

Given a nonempty graph $G$, let $\mathbf{x}$ be an eigenvector associated with $\lambda$. 
Then $\mathbf{x}$ has both positive and negative entries. Let $X = \{v : x_v \ge 0\}$ and $Y = \{v : x_v < 0\}$. Let $H$ be the spanning subgraph of $G$ with $E(H)$ to be the set of edges of $G$ between vertices $X$ and vertices of $Y$. Then $H$ is a bipartite graph with bipartition $(X, Y)$, which we call the bipartite graph of $G$ determined by $\mathbf{x}$.


\begin{lemma}\label{maxcut}
For any nonempty graph $G$ and its  bipartite graph $H$ determined by some eigenvector  $\mathbf{x}$ associated with $\lambda$, $\lambda^2 \le e(H)$.
\end{lemma}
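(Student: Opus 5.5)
The plan is to compare the Rayleigh quotient of $\mathbf{x}$ in $G$ with the one in $H$, and then reduce to Lemma~\ref{BIS} applied to $H$.

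\textbf{Setup.} Normalize $\mathbf{x}$ to be a unit vector, so that
\[
\lambda=\mathbf{x}^TA(G)\mathbf{x}=2\sum_{uv\in E(G)}x_ux_v .
\]
Split this sum according to whether the edge $uv$ lies inside $X$, inside $Y$, or between $X$ and $Y$.

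\textbf{Sign comparison.} An edge inside $X$ has both entries nonnegative, so it contributes $x_ux_v\ge 0$. An edge inside $Y$ has both entries negative, so it also contributes positively. Only the edges of $H$ can contribute negatively. Therefore
\[
\lambda\ \ge\ 2\sum_{uv\in E(H)}x_ux_v=\mathbf{x}^TA(H)\mathbf{x}\ \ge\ \lambda(H),
\]
where the last step is the Rayleigh principle for the least eigenvalue of $H$.

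\textbf{Apply Lemma~\ref{BIS}.} We need $\lambda<0$, which holds because $G$ is nonempty: the eigenvalues sum to $0$ and are not all zero. Hence $\lambda(H)\le\lambda<0$, so $\lambda^2\le\lambda(H)^2$. This also shows $H$ is nonempty, since an empty $H$ would have $\lambda(H)=0$. Lemma~\ref{BIS} applied to $H$ then gives $\lambda(H)^2\le e(H)$, and combining,
\[
\lambda^2\le \lambda(H)^2\le e(H).
\]

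\textbf{Potential obstacle.} The only delicate point is the nonemptiness of $H$, needed to invoke Lemma~\ref{BIS}. It follows from the strict negativity of $\lambda$ as above. Alternatively, since $\mathbf{x}$ has entries of both signs and $\mathbf{x}^TA(G)\mathbf{x}<0$, some edge must join a nonnegative entry to a negative one.
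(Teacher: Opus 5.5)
Your proposal is correct and is essentially the paper's argument: you compare Rayleigh quotients, drop the nonnegative contributions of edges inside $X$ and inside $Y$ to get $\lambda\ge\lambda(H)$, and then apply Lemma~\ref{BIS} to $H$. Your explicit check that $\lambda<0$, which forces $H$ to be nonempty so that Lemma~\ref{BIS} applies, is a detail the paper leaves implicit.
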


\begin{proof}
Let $\mathbf{z}$ be a vector with  $z_v = |x_v|$ for $v\in V(G)$. From Rayleigh's principle,
\[
\lambda \mathbf{x}^\top\mathbf{x}=\mathbf{x}^\top A(G)\mathbf{x}=2\sum_{uv\in E(G)}x_ux_v\ge 2\sum_{uv\in E(H)}x_ux_v=\mathbf{x}^\top A(H)\mathbf{x}\ge \lambda(H)\mathbf{x}^\top\mathbf{x},
\]
so $\lambda\ge \lambda(H)$, implies that $\lambda^2\le \lambda^2(H)$. Now the result follows from Lemma \ref{BIS}.
\end{proof}

\section{Proof of Theorem~\ref{X}}

We are now ready to prove Theorem~\ref{X}. To prove the inequality, we first transform \eqref{eq:main} into an equivalent form, and then use a case distinction based on the relationship between the clique number and the chromatic number  and the properties of the $r$-vertex-critical graphs.

\begin{proof}[Proof of Theorem~\ref{X}]
%

As $G$ is nonempty, we have $\chi\ge 2$. Also, as $G$ is nonempty,  we have by Cauchy's interlacing theorem that $\lambda=\lambda (G)\le \lambda(K_2)= -1$, so $\lambda^2\ge 1$. By Lemma \ref{BIS}, $\lambda^2\le m$. Thus
 $(\lambda^2-1)(\lambda^2-m)\le 0$ and $m+1-\lambda^2\ge 1$. 


Suppose that $\chi=2$. Then \[
m+1-\lambda^2+\sqrt{(m+1-\lambda^2)^2-4(\lambda^2-1)(\lambda^2-m)}\ge 2(m+1-\lambda^2)\ge 2
\]
with equalities if and only if $\lambda^2=m$. So \eqref{eq:main} holds, and  by Lemma \ref{BIS}, 
\eqref{eq:main} is equality if and only if $G\cong K_{a,b}\cup (n-a-b)K_1$ with $a,b\ge 1$.

Suppose that $\chi\ge 3$. 
Note that $G$ has at least $\binom{\chi}{2}$ edges. Let $s = m-\binom{\chi}{2}$. Then $s\ge 0$. Let 
\[
f(x)=x^2 - 2(m+1-\lambda^2)x + 4(\lambda^2-1)(\lambda^2-m).
\]
Recall that  $(\lambda^2-1)(\lambda^2-m)\le 0$ and $m+1-\lambda^2\ge 1$. 
Then $f(x)=0$ has two roots $r_1$ and $r_2$ with $r_1< r_2$ and $r_1\le 0$. So when $x>0$, $x\in (r_1,r_2] \iff f(x)\le 0$ with equality if and only if $x=r_2$. 
Observe  that  \eqref{eq:main} becomes $\chi(\chi-1)\le r_2$. 
As $\chi(\chi-1)>0$, \eqref{eq:main} is equivalent to $f(\chi(\chi-1))\le 0$, i.e.,
\[
(\chi(\chi-1))^2-2(m+1-\lambda^2)\chi(\chi-1)+4(\lambda^2-1)(\lambda^2-m)\le 0.
\]
As $m=s+{\chi \choose 2}$, the above inequality is equivalent to 
\[
\lambda^4-(s+1)\lambda^2-s\left({\chi \choose 2}-1\right)\le 0,
\] 
that is, 
\[
\lambda^2 \le U_\chi(s) := \frac{s+1+\sqrt{(s-1)^2 + 2\chi(\chi-1)s}}{2}.
\]
Moreover, equality holds in \eqref{eq:main}  if and only if $\lambda^2= U_\chi(s)$. Therefore it suffices to prove that $\lambda^2 \le U_\chi(s)$ with equality if and only if $G\cong K_\chi \cup (n-\chi)K_1$.

Let $H$ be the bipartite graph of $G$ determined by some eigenvector associated with $\lambda$. Let $(X,Y)$ be its bipartition.

\noindent {\bf Case 1.}
 $\omega(G)=\chi$.

Let $C$ be a fixed $\chi$-clique of $G$.

If $s=0$, then $G\cong K_\chi\cup (n-\chi) K_1$, as desired. Suppose that $s\ge 1$.   
It suffices to show that $\lambda^2 <U_\chi(s)$.

We first show an upper bound on $-\lambda$. Let $\tau = \frac{1+\sqrt{1+4s}}{2}$. Then $\tau(\tau-1)=s$.
\begin{claim}\label{c1}
$-\lambda< \tau$. 
\end{claim}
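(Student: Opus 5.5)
The plan is to compare $\lambda$ with the least eigenvalue of the clique part and treat everything else as a perturbation carrying only $s$ edges. Let $\mathbf{x}$ be a unit eigenvector for $\lambda$. Let $R$ be the spanning subgraph of $G$ with $E(R)=E(G)\setminus E(G[C])$, so $e(R)=s$ and every edge of $R$ has at least one end outside $C$. Splitting the Rayleigh quotient gives
\[
\lambda=\mathbf{x}^\top A(G)\mathbf{x}=\Big(\sum_{v\in C}x_v\Big)^2-\sum_{v\in C}x_v^2+2\sum_{uv\in E(R)}x_ux_v .
\]
The first term is nonnegative. Put $z_v=|x_v|$. Then
\[
-\lambda\le \mathbf{z}^\top B\,\mathbf{z}\le \mu:=\lambda_1(B),\qquad B=D_C+A(R),
\]
where $D_C$ is the diagonal $0/1$ matrix with $1$ exactly on the vertices of $C$. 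So the claim reduces to showing $\mu\le\tau$, i.e.\ $\mu(\mu-1)\le s$, and then arguing that the chain cannot be tight. The model case is $R$ a star $K_{1,s}$ centred at a vertex of $C$. There the Perron equations $\mu z_u=z_u+s w$ and $\mu w=z_u$ give exactly $\mu(\mu-1)=s$, so the target bound is sharp for $B$. This is why $\tau$ appears.

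To prove $\mu(\mu-1)\le s$, I will adapt Stanley's edge-count argument for $\lambda_1\le\frac{-1+\sqrt{1+8m}}{2}$ to the matrix $B$. Let $\mathbf{z}\ge 0$ be a unit Perron vector of $B$; it is nonnegative by Perron--Frobenius applied to the nonnegative matrix $B$ componentwise. For each vertex $v$, the row equation reads $(\mu-[v\in C])z_v=\sum_{w\in N_R(v)}z_w$. I will square this and use Cauchy--Schwarz over $N_R(v)$, or alternatively look at $B^2-B$. The key inputs are that $B$ has $1$'s on the diagonal only on $C$, and that $C$ is independent in $R$, so each $R$-edge meets $V\setminus C$. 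Summing the row identities weighted by $z_v$ and using $\sum_v z_v^2=1$ should bound $\mu^2-\mu$ by a sum over edges of terms like $z_u^2+z_w^2$. Each edge is counted so that the total is at most $e(R)=s$. The exact bookkeeping is this: vertices in $C$ pick up the diagonal $1$ but have all their $R$-neighbours outside $C$. This is where the argument must be organised carefully, for instance by writing $\mu^2-\mu=\mathbf{z}^\top(B^2-B)\mathbf{z}$ and bounding the entries of $B^2-B$ edge by edge.

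Strictness is the last step. If $-\lambda=\tau$, every inequality above must be an equality. In particular $\sum_{v\in C}x_v=0$, $\mathbf{z}$ must be a Perron vector of $B$ attaining $\mu=\tau$, and equality must hold in the Stanley-type estimate. That last equality should force the $R$-edges carrying weight to form a single star centred in $C$ with $s$ leaves, all other entries of $\mathbf{z}$ vanishing. In that configuration only one vertex of $C$ has nonzero $z$-value, which contradicts $\sum_{v\in C}x_v=0$ with $x_v\ne0$. Alternatively, equality $|\sum x_ux_v|=\sum z_uz_v$ on $R$-edges would force sign patterns incompatible with the eigenvector equation on $C$. I expect the main obstacle to be the middle step: getting the clean inequality $\mu(\mu-1)\le s$ for $B=D_C+A(R)$ rather than the weaker Weyl-type bound $\mu\le 1+\sqrt{s}$. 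The first thing I will try is the entrywise analysis of $B^2-B$ combined with the independence of $C$ in $R$.
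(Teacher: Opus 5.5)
\textbf{There is a genuine gap: your first step reduces the claim to an inequality that is false.} When you replace $-2\sum_{uv\in E(R)}x_ux_v$ by $2\sum_{uv\in E(R)}z_uz_v$, you discard all sign information on the non-clique edges. The resulting $\mu=\lambda_1(D_C+A(R))$ is then not bounded by $\tau$ once $R$ is non-bipartite.

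Concretely, take $G=K_5\cup K_5$ and let $C$ be one copy. This is a Case 1 graph with $\chi=\omega=5$, $s=10$ and $\tau=\frac{1+\sqrt{41}}{2}\approx 3.70$. But $R$ is the other $K_5$, so $B$ is block diagonal with blocks $I_5$ and $A(K_5)$, giving $\mu=4>\tau$. The claim itself is fine here, since $-\lambda=1$, but no entrywise bookkeeping of $B^2-B$ can produce $\mu(\mu-1)\le s$.

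For non-bipartite $R$, Stanley-type counting only yields $\mu$ of order $\sqrt{2s}$. Even the ``weaker'' bound $\mu\le 1+\sqrt{s}$ you mention is false in general: take $R=K_6$, where $\mu=5>1+\sqrt{15}$. The estimate $\lambda_1\le\sqrt{e}$ behind your intuition is a bipartite fact.

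\textbf{The missing idea is to drop, before taking absolute values, every edge of $R$ with $x_ux_v\ge 0$.} Such edges only increase $\lambda$. Hence $-\lambda\le\mathbf{z}^\top(D_C+A(F))\mathbf{z}$, where $F$ is the bipartite subgraph of $R$ formed by the edges between $V^+=\{v:x_v>0\}$ and $V^-=\{v:x_v<0\}$.

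For this $F$, which has at most $s$ edges, each meeting $V\setminus C$, the bound $\lambda_1(D_C+A(F))\le\tau$ does hold, and this is exactly what the paper proves. Set $D=\tau I-D_C$ and $W=D^{-1/2}A(F)D^{-1/2}$. Cauchy--Schwarz together with bipartiteness, $4\bigl(\sum_{u\in V^+}y_u^2\bigr)\bigl(\sum_{v\in V^-}y_v^2\bigr)\le 1$ for a unit Perron vector $\mathbf{y}$ of $W$, gives $\rho(W)^2\le\sum_{uv\in E(F)}w_{uv}^2\le 1$. This uses that an edge from $C$ to $V\setminus C$ has $w_{uv}^2=\frac{1}{\tau(\tau-1)}=\frac1s$, while an edge outside $C$ has $w_{uv}^2=\frac{1}{\tau^2}<\frac1s$. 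Hence $D-A(F)$ is positive semidefinite.

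\textbf{Your predicted equality structure is also too narrow.} One has $\lambda_1(D_C+A(K_{p,q}))=\tau$ whenever the $p$-side lies in $C$ and $pq=s$, not only for stars. The contradiction still follows as in the paper. Equality forces all $s$ non-clique edges into $F$, running between $C$ and $V\setminus C$ and forming a single complete bipartite component. Moreover $(D-A(F))\mathbf{z}=\mathbf{0}$ makes $\mathbf{z}$ vanish off that component. So all vertices of $C$ with nonzero entries lie on one side, hence have the same sign, contradicting $\sum_{v\in C}x_v=0$.
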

\begin{proof}
Let $E_1=E(G[C])$. Let $M=A(G-E_1)$ and $A_0=A(G)-M$. 
Note that $A_0 + \tau I = J_\chi + D$, where $J_\chi$ is a matrix with a $\chi\times \chi$ principal submatrix of all  ones (corresponding to the vertices of $C$) and zeros elsewhere, and $D$ is a diagonal matrix with \[
D_{vv}=\begin{cases}
\tau-1, & v\in C,\\
\tau, & v\notin C.
\end{cases}
\]

We show that $D+M$ is positive semidefinite.
Let $\mathbf{x}\in  \mathbb{R}^{n}$ and $\mathbf{z}$ be a vector with $z_v=|x_v|$ for any $v\in V(G)$. Let $V^+ = \{v : x_v > 0\}$, $V^- = \{v : x_v < 0\}$ and $F$ be the spanning bipartite subgraph of $G-E(C)$ induced by the edges between $V^+$ and $V^-$.
Then \begin{align*}
\mathbf{x}^\top (D+M)\mathbf{x}&=\sum_{v\in V(G)}D_{vv}x_v^2+2\sum_{uv\in E(G-E(C))}x_ux_v\\
&\ge \sum_{v\in V(G)}D_{vv}z_v^2-2\sum_{uv\in E(F)}z_uz_v=\mathbf{z}^\top (D-A(F))\mathbf{z}.
\end{align*}
Let $W = D^{-1/2}A(F)D^{-1/2}$.
Let $\mathbf{y}$ be a nonnegative unit eigenvector corresponding to $\rho(W)$, where $\rho(W)$ is the spectral radius of $W$. Then \[
\rho(W)=2\sum_{uv\in E(F)}w_{uv}y_uy_v.
\]
By Cauchy-Schwarz inequality, we have \begin{align*}
\rho(W)^2 &\le \left(\sum_{uv\in E(F)}w_{uv}^2\right)\left( \sum_{uv\in E(F)}4y_u^2y_v^2\right)\\
&\le 4\left(\sum_{uv\in E(F)}w_{uv}^2\right)\left(\sum_{u\in V^+}y_u^2\right)\left(\sum_{u\in V^-}y_u^2\right)\le \sum_{uv\in E(F)}w_{uv}^2,
\end{align*}
where the second inequality follows as $F$ is bipartite, and the third inequality follows as $\sum_{u\in V^+}y_u^2+\sum_{u\in V^-}y_u^2\le 1$.
As every edge in $F$ either joins $C$ to $V(G)\setminus C$ or lies in $V(G)\setminus C$, we have $w_{uv}^2=\frac{1}{\tau(\tau-1)}=\frac{1}{s}$ in the former case and $w_{uv}^2=\frac{1}{\tau^2}<\frac{1}{s}$. Then $\sum_{uv\in E(F)}w_{uv}^2\le 1$. This shows that $\rho(W)\le 1$, implying that $I-W$ is positive semidefinite.
Note that $D-A(F)=D^{1/2}(I-W)D^{1/2}$. Thus $D-A(F)$ is positive semidefinite, and hence $D+M$ is positive semidefinite, as desired. 

As both  $J_{\chi}$ and $D+M$ are positive semidefinite, 
 $A(G)+\tau I=J_{\chi}+D+M$ is positive semidefinite, so  $-\lambda\le \tau$. 

Suppose that $-\lambda=\tau$. From the preceding argument, there exists a non-zero vector $\mathbf{x}$ such that $\sum_{v\in C}x_v=0$, $\mathbf{z}^\top (D-A(F))\mathbf{z}=0$ and $\rho(W)=1$, where $\mathbf{z}$ is the vector corresponding to $\mathbf{x}$ defined above. From $\rho(W)=1$, we have $F$ contains all $s$ edges outside $C$, each of these edges connects a vertex of $C$ and a vertex outside $C$. Consequently, $W=\tfrac{1}{\sqrt{s}}A(F)$ and $\lambda_1(F)^2=s=e(F)$, implying that $F$ has a non-trivial complete bipartite component with $s$ edges. So the vertices of $C$ in $F$ are all contained in either $V^+$ or $V^-$, contradicting $\sum_{v\in C}x_v=0$. So $-\lambda<\tau$.
\end{proof}

By Claim~\ref{c1},
\[
\lambda^2<\tau^2=\left(\frac{1+\sqrt{1+4s}}2\right)^2=s+\frac12+\frac12\sqrt{1+4s}.
\]
To prove $\lambda^2<U_\chi(s)$, it suffices to show that $s+\frac12+\frac12\sqrt{1+4s}\le U_\chi(s)$, or equivalently, 
$\sqrt{1+4s}\le \chi(\chi-1)-3$, and this is further equivalent to \[
s\le\frac{(\chi(\chi-1)-3)^2-1}{4}=:A_\chi.
\]
So the result holds if $s\le A_\chi$. 

Suppose next that $s>A_\chi$.
 Let $a=|X\cap C|$ and $b=|Y\cap C|$. Then $a+b=\chi$ and 
\[
e(G[X])+e(G[Y])\ge {a\choose 2}+{b\choose 2}=a^2-\chi a+\frac{1}{2}(\chi^2-\chi)
\ge \left\lfloor\frac{(\chi-1)^2}{4}\right\rfloor.
\]
So 
Lemma~\ref{maxcut} gives
$$\lambda^2\le e(H)=m-(e(G[X])+e(G[Y]))\le m-\left\lfloor\frac{(\chi-1)^2}{4}\right\rfloor=s+\binom{\chi}{2}-\left\lfloor\frac{(\chi-1)^2}{4}\right\rfloor.$$
To prove $\lambda^2 <U_\chi(s)$, it suffices to show that
 $s+\binom{\chi}{2}-\left\lfloor\frac{(\chi-1)^2}{4}\right\rfloor< U_\chi(s)$, or equivalently, 
\[
s> \frac{\left(\chi(\chi-1)-2\left\lfloor\frac{(\chi-1)^2}{4}\right\rfloor\right)\left(\chi(\chi-1)-2\left\lfloor\frac{(\chi-1)^2}{4}\right\rfloor-2\right)}{4\left\lfloor\frac{(\chi-1)^2}{4}\right\rfloor}=:R_\chi.
\]
 If $\chi=2h\ge4$, then
\[
A_\chi-R_\chi=2(2h^2-1)(h^2-h-1)\ge0,
\]
and if $\chi=2h+1\ge3$, then
\[
A_\chi-R_\chi=
\frac{(h-1)(h+1)(4h^3+4h^2-2h-1)}h\ge0.
\]
Thus  $s>A_\chi\ge  R_\chi$, as desired.

\noindent {\bf Case 2.}  $\omega(G)<\chi$. 

It suffices to show that $\lambda^2 <U_\chi(s)$.

\noindent {\bf Case 2.1.}  $\chi=3$. 

Note that  $G$ is triangle free and non-bipartite. As $H\ne G$, Lemma~\ref{maxcut} gives $\lambda^2\le m-1=s+2$. It suffices to show  $s+2<U_3(s)$, i.e., 
\[
s+2<\frac{1}{2}\left(s+1+\sqrt{(s-1)^2+12s} \right),
\] 
 or equivalently, $s> 2$. In fact, a triangle-free non-bipartite graph contains an odd cycle of length at least five, so $m\ge5$ and
$s=m-3$. If $m\ge 6$, then $s>2$, as desired. If $m=5$, then 
$G\cong C_5\cup (n-5)K_1$, and by a direct calculation, $\lambda=-\frac{1+\sqrt5}{2}$, and hence
$\lambda^2=\frac{3+\sqrt5}{2}<4=U_3(2)$. 

\noindent {\bf Case 2.2.}  $\chi\ge4$. 

Let
$\beta=m-e(H)$. 
Let $B_\chi=\left\lfloor\frac{(\chi-1)^2}{4}\right\rfloor
 +\left\lfloor\frac{\chi}{2}\right\rfloor-1$.

\begin{claim}\label{lem:maxcut-deficiency}
 $\beta\ge B_\chi$.
\end{claim}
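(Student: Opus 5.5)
The plan is to bound $\beta=e(G[X])+e(G[Y])$ from below by the chromatic numbers of the two sides. Set $p=\chi(G[X])$ and $q=\chi(G[Y])$, with the convention that an empty side has chromatic number $0$. Merging a proper colouring of $G[X]$ with one of $G[Y]$, using disjoint palettes, colours $G$, so $p+q\ge \chi$. Every $r$-chromatic graph has at least $\binom r2$ edges, hence $\beta\ge\binom p2+\binom q2$. For fixed $p+q=t$ this sum is minimised when $p,q$ are as balanced as possible, and the minimum equals $\lfloor (t-1)^2/4\rfloor$. Note also that $\lfloor \chi^2/4\rfloor=\lfloor(\chi-1)^2/4\rfloor+\lfloor \chi/2\rfloor$, so $\lfloor \chi^2/4\rfloor-1=B_\chi$. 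This identity drives all the counting below.

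\emph{Step 1: $p+q\ge \chi+1$.} Then $\beta\ge \lfloor \chi^2/4\rfloor>B_\chi$ and we are done. So assume from now on that $p+q=\chi$.

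\emph{Step 2: one side has chromatic number at most $1$.} Say $p\le 1$, so $q\ge \chi-1$ and $\beta\ge\binom{\chi-1}{2}$. A direct comparison shows $\binom{\chi-1}2\ge \lfloor(\chi-1)^2/4\rfloor+\lfloor\chi/2\rfloor-1$ for all $\chi\ge 4$; for $\chi=4$ both sides equal $3$. So assume $p,q\ge 2$.

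\emph{Step 3: one side has many edges.} Suppose $e(G[X])\ge \binom p2+p-1$. Then
\[
\beta\ge \binom p2+p-1+\binom q2=\binom{p+1}2+\binom q2-1\ge \left\lfloor \frac{\chi^2}{4}\right\rfloor-1=B_\chi,
\]
using $(p+1)+q=\chi+1$. The case $e(G[Y])\ge\binom q2+q-1$ is symmetric.

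\emph{Step 4: both sides have few edges.} The remaining, and key, case is $e(G[X])\le\binom p2+p-2$ and $e(G[Y])\le \binom q2+q-2$. Here we derive a contradiction with $\omega(G)<\chi$. By Lemma~\ref{lem:small-extremal-coloring} applied to $G[X]$ and to $G[Y]$, there are a $p$-clique $C_X\subseteq X$ and a $q$-clique $C_Y\subseteq Y$ with $\chi(G[X]-u)\le p-1$ for every $u\in C_X$ and $\chi(G[Y]-v)\le q-1$ for every $v\in C_Y$.

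Suppose some $u\in C_X$ and $v\in C_Y$ are nonadjacent. Colour $G[X]-u$ with $p-1$ colours and $G[Y]-v$ with $q-1$ further colours. Then give $u$ and $v$ one common new colour. This is proper, because $u$ and $v$ are nonadjacent and every other vertex uses a different palette. It therefore colours $G$ with $p+q-1=\chi-1$ colours, which is a contradiction.

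Hence every vertex of $C_X$ is adjacent to every vertex of $C_Y$. Then $C_X\cup C_Y$ is a clique of size $p+q=\chi$, contradicting $\omega(G)<\chi$. So this case cannot occur, and the claim follows.

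The step I expect to be the main obstacle is Step 4. The edges between $X$ and $Y$ do not contribute to $\beta$, so no pure edge count can work there. The only available leverage is the hypothesis $\omega<\chi$, and it enters through this recolouring argument built on the critical cliques supplied by Lemma~\ref{lem:small-extremal-coloring}. The remaining work is routine: checking the floor identities and the minimisation of $\binom p2+\binom q2$ for each parity of $\chi$.
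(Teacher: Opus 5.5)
Your proof is correct and uses the same ingredients as the paper: $\chi(G[X])+\chi(G[Y])\ge\chi$, the bound $\beta\ge\binom p2+\binom q2$, Lemma~\ref{lem:small-extremal-coloring} applied to both sides, and the recolouring that gives a nonadjacent pair $u\in C_X$, $v\in C_Y$ one shared colour, with $\omega<\chi$ ruling out that all cross pairs are adjacent. The only difference is bookkeeping: the paper argues by contradiction and computes the slack $\beta-\binom a2-\binom b2\le a-2$ separately for each parity of $\chi$, whereas your direct split via $\binom p2+p-1=\binom{p+1}2-1$ and $B_\chi=\lfloor\chi^2/4\rfloor-1$ avoids the parity computation (your Step~2 is even subsumed by Step~3, since its hypothesis holds automatically when $p\le 1$).
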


\begin{proof}
Let $a=\chi(G[X])$ and $b=\chi(G[Y])$. 
As $G[X]$ and $G[Y]$ can be colored with disjoint sets of colors, $a+b\ge \chi$. Then
\[
\beta\ge\binom a2+\binom b2.
\]

Suppose to the contrary that $\beta<B_\chi$. As $\beta$ is an integer,
\[
\beta\le \left\lfloor\frac{(\chi-1)^2}{4}\right\rfloor
 +\left\lfloor\frac{\chi}{2}\right\rfloor-2\le \begin{cases}
h^2-2 &\text{if }\chi=2h,\\
h^2+h-2 &\text{if }\chi=2h+1.
\end{cases}
\]
If $a+b\ge \chi+1$, then
\[
\binom a2+\binom b2\ge
\begin{cases}
h^2 &\text{if }\chi=2h,\\
h^2+h &\text{if }\chi=2h+1,
\end{cases}
\]
a contradiction. So $a+b=\chi$.

Assume that $a\le b$. 
If $\chi=2h$, write $a=h-r$ and $b=h+r$, where $r\ge0$. Then
$\binom a2+\binom b2=\left\lfloor\frac{(\chi-1)^2}{4}\right\rfloor+r^2$, 
and hence
\[
\beta-\binom a2-\binom b2
 \le h-2-r^2\le h-r-2=a-2.
\]
If $\chi=2h+1$, write $a=h-r$ and $b=h+1+r$. Then
$\binom a2+\binom b2=\left\lfloor\frac{(\chi-1)^2}{4}\right\rfloor+r(r+1)$,
and hence
\[
\beta-\binom a2-\binom b2
 \le h-2-r(r+1)\le h-r-2=a-2.
\]
So in both cases, $\beta-\binom a2-\binom b2\le a-2$.
It then follows that $a\ge2$. 
Note that $\beta=e(G[X])+e(G[Y])$. Then by Lemma~\ref{edge}, $e(G[Y])\ge \binom b2$, we have  \[
e(G[X])\le \binom a2+\binom b2+a-2-e(G[Y])\le \binom a2+a-2.
\]
Similarly, $e(G[Y])\le \binom b2+a-2\le \binom b2 +b-2$. 
By Lemma~\ref{lem:small-extremal-coloring},
$G[X]$ contains an $a$-clique $A$ and $G[Y]$ contains a $b$-clique $B$ such that deleting any vertex of the corresponding clique lowers the chromatic number  by at least one.

Since $|A|+|B|=a+b=\chi$ and $G$ contains no $K_\chi$, there are vertices $u\in A$ and $v\in B$ with
$uv\notin E(G)$. Color $G[X]-u$ with $a-1$ colors and $G[Y]-v$ with a disjoint set of $b-1$ colors,
and give $u$ and $v$ one new common color. This is a proper $(\chi-1)$-coloring of $G$, a contradiction.
Thus $\beta\ge B_\chi$. 
\end{proof}

Let $d_\chi=\binom{\chi}{2}-B_\chi$ and $L_\chi=\frac{d_\chi(d_\chi-1)}{B_\chi}$.

\begin{claim}\label{lem:critical-estimates}
There is an integer $a\ge2$ such that $s\ge\frac{(\chi-1)a+\chi-3}{2}$ and $\lambda_1\ge \chi-1+\frac{\chi-3}{\chi+a}$.
\end{claim}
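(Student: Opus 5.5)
The plan is to pass to an induced $\chi$-vertex-critical subgraph $F$ of $G$ and to take $a$ to be the \emph{excess} of its order over $\chi$, i.e.\ $a=|V(F)|-\chi$. Since $\omega(G)<\chi$, the subgraph $F$ is not complete. As $\chi\ge 4\ge 3$, Lemma~\ref{order} gives $|V(F)|\ge \chi+2$, so $a\ge 2$. Because $\chi\ge 4$, Theorem~\ref{thm:dirac} applies to $F$ and yields
\[
2e(F)\ge (\chi-1)(\chi+a)+\chi-3 .
\]
Since $e(G)\ge e(F)$ and $m=s+\binom{\chi}{2}$, we get
\[
2s\ge (\chi-1)(\chi+a)+\chi-3-\chi(\chi-1)=(\chi-1)a+\chi-3 ,
\]
which is the first assertion.

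For the spectral bound, I use Lemma~\ref{subgraph}, which gives $\lambda_1(G)\ge\lambda_1(F)$. It therefore suffices to bound $\lambda_1(F)$ from below. Take the all-ones vector as a test vector in the Rayleigh quotient:
\[
\lambda_1(F)\ge \frac{2e(F)}{|V(F)|}\ge \frac{(\chi-1)(\chi+a)+\chi-3}{\chi+a}=\chi-1+\frac{\chi-3}{\chi+a}.
\]
This is the second assertion, obtained with the same $a$ as the first.

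The only delicate point is making sure the same integer $a$ works in both inequalities. This is automatic, because both come from the single Dirac inequality applied to one fixed critical subgraph $F$ with $|V(F)|=\chi+a$. It is also worth noting that $a$ must be defined through the order of $F$, not of $G$, since isolated vertices or other extra parts of $G$ would spoil the average-degree bound. Dirac's theorem needs $\chi\ge4$, and this holds in Case 2.2, so no obstacle remains.
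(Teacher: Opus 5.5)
Your proposal is correct and follows essentially the same route as the paper: you take an induced $\chi$-vertex-critical subgraph $F$, set $a=|V(F)|-\chi\ge 2$ via Lemma~\ref{order}, and apply Theorem~\ref{thm:dirac} together with $e(F)\le m$ for the bound on $s$. You then use Lemma~\ref{subgraph} with the all-ones Rayleigh quotient on $F$ for the bound on $\lambda_1$, exactly as the paper does.
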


\begin{proof}
Let $F$ be an induced $\chi$-vertex-critical subgraph of $G$. As $\omega(G)<\chi$, $F$ is not complete. By Lemma~\ref{order}, $|V(F)|\ge \chi+2$. Let $a=|V(F)|-\chi$. By  the fact that $e(F)\le m=\binom \chi2+s$ and Theorem~\ref{thm:dirac}, $2(\binom \chi2+s)\ge 2e(F)\ge(\chi-1)(\chi+a)+\chi-3$, so the first inequality follows. 
Moreover, by Lemma~\ref{subgraph} and Rayleigh's principle, we have
$\lambda_1(G)\ge\lambda_1(F)\ge\frac{2e(F)}{\chi+a}\ge \chi-1+\frac{\chi-3}{\chi+a}$,
which proves the second inequality.
\end{proof}

\noindent {\bf Case 2.2.1} $s\ge L_\chi$.

As $s\ge L_\chi$, we have $4B_\chi s-4d_\chi(d_\chi-1)\ge 0$, equivalently, 
\[
s+2d_\chi-1\le\sqrt{(s-1)^2+2\chi(\chi-1)s},
\] 
so $s+d_\chi\le U_\chi(s)$. It then follows by Lemma~\ref{maxcut} and Claim \ref{lem:maxcut-deficiency} that 
$\lambda^2\le m-\beta\le m-B_\chi=s+d_\chi\le U_\chi(s)$.

Suppose that $\lambda^2 = U_\chi(s)$. 
Then $s+d_\chi\le U_\chi(s)$, so  $s=L_\chi$.
If $\chi=2h$, then $B_\chi=h^2-1$, $d_\chi=h^2-h+1$ and $L_\chi=\frac{h(h^2-h+1)}{h+1}$. As $s$ is an integer,  $(h+1)|h(h^2-h+1)$. As $h(h^2-h+1)\equiv -3\pmod {h+1}$, we have $(h+1)|3$, which implies that $h=2$. So $\chi=4$ and $s=L_\chi=2$. However, we have by Claim~\ref{lem:critical-estimates}, $s\ge \frac{3a+1}{2}\ge \frac{7}{2}$, a contradiction. 
If $\chi=2h+1$, then 
$B_\chi=h^2+h-1$, $d_\chi=h^2+1$, and $L_\chi=\frac{h^2(h^2+1)}{h^2+h-1}$.
As $L_\chi$ is an integer, we have $(h^2+h-1)|h^2(h^2+1)$. Moreover, 
$h^2(h^2+1)\equiv3-4h\pmod{h^2+h-1}$. If $h\ge 3$, then
$(h^2+h-1)-(4h-3)=(h-1)(h-2)>0$, a contradiction. So $h=2$, $\chi=5$ and $s=L_\chi=4$.
By Claim~\ref{lem:critical-estimates} again, we have $s\ge \frac{4a+2}{2}\ge 5$, also a contradiction. So $\lambda^2< U_\chi(s)$.

\noindent {\bf Case 2.2.2.} $s< L_\chi$.

Suppose first that $\chi=4,5,6$. By a direct calculation, $L_4=2$, $L_5=4$ and $L_6=\frac{21}{4}$. However, Claim~\ref{lem:critical-estimates} gives $s\ge \frac{3\chi-5}{2}$, contradicting $s<L_\chi$. So $\chi\ge 7$.

Suppose that $\chi=7$. By Claim~\ref{lem:critical-estimates}, $s\ge8$. As $s$ is an integer and $s<L_7<9$, we have $s=8$. Again by Claim~\ref{lem:critical-estimates},
$8\ge\frac{6a+4}{2}=3a+2$, so $a=2$. Moreover, $\lambda_1(G)\ge6+\frac49=\frac{58}{9}$. So $\lambda^2\le2\left(\binom72+8\right)-\left(\frac{58}{9}\right)^2
 =\frac{1334}{81}<17$. On the other hand, $U_7(8)=\frac{9+\sqrt{721}}2>17$. Thus, $\lambda^2<U_7(8)$, as desired.

Suppose in the following that $\chi\ge8$.
If $\chi=2h$, then
$L_\chi=\frac{h(h^2-h+1)}{h+1}<h^2=\frac{\chi^2}{4}$.
If $\chi=2h+1$, then $L_\chi=\frac{h^2(h^2+1)}{h^2+h-1}$ and $\frac{\chi^2}{4}=h^2+h+\frac14$. As 
$$\left(h^2+h+\frac14\right)(h^2+h-1)-h^2(h^2+1)
 =\frac{8h^3-3h^2-3h-1}{4}>0,$$ we have $L_\chi<\frac{\chi^2}{4}$. So in both cases,  $L_\chi<\frac{\chi^2}{4}$. 
If $a\ge \chi$, then Claim~\ref{lem:critical-estimates} gives
$s\ge\frac{(\chi-1)\chi+\chi-3}{2}=\frac{\chi^2-3}{2}>\frac{\chi^2}{4}$,
contradicting $s<L_\chi<\frac{\chi^2}{4}$. Hence $a<\chi$, so $\chi+a\le2\chi-1$. It then follows by Claim~\ref{lem:critical-estimates} that
$\lambda_1(G)\ge \chi-1+\frac{\chi-3}{2\chi-1}\ge \chi-\frac23$. 
So 
\[
\lambda^2\le 2m-\lambda_1^2\le \chi(\chi-1)+2s-\left(\chi-\frac23\right)^2
 =2s+\frac{\chi}{3}-\frac49. 
\]
We are left  to prove $2s+\frac{\chi}{3}-\frac49<U_\chi(s)$ whenever
$\frac{3\chi-5}{2}\le s<L_\chi$, which is equivalent to $3s+\frac{2\chi}{3}-\frac{17}{9}<\sqrt{(s-1)^2+2\chi(\chi-1)s}$. Since $s\ge \frac{3\chi-5}{2}$ and $\chi\ge 8$, we have $3s+\frac{2\chi}{3}-\frac{17}{9}\ge \frac{93\chi-169}{18}\ge 0$, so the above inequality is further equivalent to \[
324s^2-(81\chi^2-243\chi+378)s+18\chi^2-102\chi+104=:P_\chi(s)<0. 
\]
Note that
\begin{align*}
P_\chi\left(\frac{3\chi-5}{2}\right)&=-\frac{243\chi^3-2628\chi^2+7413\chi-6148}{2}\\
&=-\frac{243(\chi-8)^3+3204(\chi-8)^2+12021(\chi-8)+9380}{2}<0.
\end{align*} 
Moreover, if $\chi=2h$, then $h\ge 4$ and
\begin{align*}
P_{\chi}(L_\chi) &=-\frac{2(3h^2-11h+13)(27h^3-12h^2+11h-4)}{(h+1)^2}\\
&=-\frac{2\left(3(h-4)^2+13(h-4)+17\right)\left(27(h-4)^3+312(h-4)^2+1211(h-4)+1576\right)}{(h+1)^2}\\
&<0,
\end{align*}
and if $\chi=2h+1$, then $h\ge 4$, and  
\begin{align*}
P_\chi(L_\chi)&=-\frac{2(3h^3-14h^2+16h-10)(27h^4+15h^3+14h^2-7h+1)}{(h^2+h-1)^2}\\
&=-\frac{2\left(3(h-4)^3+22(h-4)^2+48(h-4)+22\right)}{(h^2+h-1)^2}\\
&\quad \cdot \left( 27(h-4)^4+447(h-4)^3+2786(h-4)^2+7737(h-4)+8069\right)\\
&<0.
\end{align*} 
As $P_\chi$ is convex in $s$, we have $P_\chi(s)<0$ whenever $\frac{3\chi-5}{2}\le s<L_\chi$. Therefore, $\lambda^2<U_\chi(s)$. 

Combining all cases, we complete the proof.
\end{proof}

\bigskip
\noindent {\bf Acknowledgements.} 
This work was supported by National Natural Science Foundation of China (No.~12571364).

\end{document}